\newcommand{\adj}{\operatorname{adj}}
\newcommand{\loc}{\operatorname{loc}}
\newcommand{\R}{\mathbb R}
\DeclareMathOperator{\diam}{diam}
\DeclareMathOperator{\dist}{dist}
\DeclareMathOperator{\card}{card}
\newcommand{\abs}[1]{\lvert#1\rvert}
\newcommand{\norm}[1]{\lVert#1\rVert}
\DeclareMathOperator{\esssup}{ess \, sup}
\newtheorem{theorem}{Theorem}[section]
\newtheorem{conjecture}[theorem]{Conjecture}
\newtheorem*{theorem*}{Theorem}{\bf}{\it}
\newtheorem{proposition}[theorem]{Proposition}
\newtheorem*{proposition*}{Proposition}{\bf}{\it}
\newtheorem{lemma}[theorem]{Lemma}
\newtheorem*{lemma*}{Lemma}{\bf}{\it}
\theoremstyle{definition}
\newtheorem*{definition*}{Definition}
\theoremstyle{remark}
\newtheorem{remark}[theorem]{Remark}
\newtheorem{example}[theorem]{Example}
\numberwithin{equation}{section}
\begin{document}

\title{Remarks on Martio's conjecture}

\author{Ville Tengvall}
\address{Department of Mathematics and Statistics, P.O. Box 68 (Pietari Kalmin katu 5), FI-00014 University of Helsinki, Finland}
\thanks{The research of the author was supported by the Academy of Finland, project number 308759.}

\address{Department of Mathematics and Statistics, University of Jyv\"{a}skyl\"{a}, P.O. Box 35 (MaD), FI-40014 University of Jyv\"{a}skyl\"{a}, Finland}
\email{ville.j.tengvall@jyu.fi}

\subjclass[2010]{30C65, 30C62}
\keywords{Branch set, quasiregular mappings, local homeomorphism, Martio's conjecture, BLD-mappings}
\date{\today}

\begin{abstract}
We introduce a certain integrability condition for the reciprocal of the Jacobian determinant which guarantees the local homeomorphism property of quasiregular mappings with a small inner dilatation. This condition turns out to be sharp in the planar case. We also show that every branch point of a quasiregular mapping with a small inner dilatation is a Lebesgue point of the differential matrix of the mapping. 
\end{abstract}

\maketitle

\section{Introduction}\label{sec:Intro}

For a given constant $K \ge 1$ a mapping
$$f : \Omega \to \R^n \quad \text{($\Omega \subset \R^n$ domain with $n \ge 2$)}$$
is called \emph{$K$-quasiregular} if it belongs to Sobolev space $W_{loc}^{1,n}(\Omega, \R^n)$ and if the following \emph{distortion inequality}
\begin{align*}
\abs{Df(x)}^n \le K J_f(x)
\end{align*}
holds for almost every $x \in \Omega$. If a mapping is $K$-quasiregular for some $K \ge 1$ then it is called \emph{quasiregular} and if it is, in addition, a homeomorphism we call it \emph{quasiconformal}. Quasiregular and quasiconformal mappings have been intensively studied for several decades and for their basic properties and more detailed background we refer to monographies \cite{Astala-Iwaniec-Martin, HeinonenKilpelainenMartioBook, IwaniecMartin,Reshetnyak67, Rickman-book, Vuorinen, VaisalaBook}. 

To every quasiregular mapping $f : \Omega \to \R^n$ we associate the \emph{inner distortion function}
\begin{align*}
K_I(\cdot, f) : \Omega \to [1, +\infty], \quad K_I(x,f)
=  \left\{ \begin{array}{ll}
\frac{\abs{D^{\#}f(x)}^{n}}{J_f(x)^{n-1}}, & \textrm{if $J_f(x)>0$}\\
1, & \textrm{otherwise,}
\end{array} \right.
\end{align*}
that measures the distortion of the infinitesimal geometry of $n$-dimensional balls under the mapping. We define the corresponding \emph{inner dilatation} in the following way
\begin{align*}
K_I(f) \colonequals \underset{x \in \Omega}{\esssup} \, K_I(x,f) \, .
\end{align*}
Above and in what follows
$$J_f(x) \colonequals \det Df(x) \quad \text{and} \quad D^{\sharp}f(x) \colonequals \adj Df(x)$$
stand for the \emph{Jacobian determinant} and the \emph{adjugate matrix} associated to the differential matrix $Df(x)$. In addition, in the context of matrices $\abs{A}$ denotes the \emph{operator norm} of an $n \times n$ matrix $A$. 

In this article we study the local homeomorphism property of quasiregular mappings by investigating the conditions under which the \emph{branch set} 
$$\mathcal{B}_f = \{ x \in \Omega : \text{$f$ is not a local homeomorphism at $x$} \}$$
of a given quasiregular mapping $f : \Omega \to \R^n$ with a small inner dilatation is empty. Our work is motivated by the well-known \emph{Martio's conjecture} which states that every quasiregular mapping in dimension $n \ge 3$ that has the inner dilatation less than two is a local homeomorphism. The conjecture was originally presented in a paper \cite{MRV-71} by Martio, Rickman, and Väisälä and it was motivated by the preliminary work of Martio \cite{Martio1970}. In this article we state and examine a slightly stronger version of this long-standing open problem:
\begin{conjecture}[Strong Martio's conjecture]\label{StrongMartio}
	A non-constant quasiregular mapping 
	$$f : \Omega \to \R^n \quad (\text{$\Omega \subset \R^n$ domain with $n \ge 3$})$$
	with $K_I(f) < \inf_{x \in \mathcal{B}_f} i(x,f)$ is a local homeomorphism. 
\end{conjecture}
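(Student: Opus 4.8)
The statement strengthens the long-standing Martio conjecture and will not be settled in full here; what follows is the programme into which it fits, with the two main theorems of this paper providing its first steps. Argue by contradiction: assume $f$ is non-constant quasiregular with $K_I(f)<\inf_{x\in\mathcal B_f}i(x,f)$ and $\mathcal B_f\neq\emptyset$, fix $b\in\mathcal B_f$ and a normal neighbourhood $U=U(b,f,s)$ of $b$, and put $i\colonequals i(b,f)$, so that $f|_U$ is proper with $\deg(f,U)=i\geq 2$ while $K_I(f)<i$. Throughout one uses that $f$ is discrete, open, sense-preserving and differentiable a.e. with $J_f>0$ a.e., that $\mathcal B_f$ has topological dimension at most $n-2$, and that the outer dilatation $K_O(f)=\esssup_{\Omega}\abs{Df}^n/J_f$ satisfies $K_O(f)\leq K_I(f)^{n-1}<\infty$, so $\abs{Df}^n\leq K_O(f)\,J_f$ a.e.

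The first reduction uses the Lebesgue-point theorem of this paper. Since $b$ is a Lebesgue point of $Df$, averaging the distortion inequality over $B(b,r)$ and letting $r\to 0$ yields $\abs{Df(b)}^n\leq K_O(f)\det Df(b)$ for the Lebesgue value $Df(b)$, so either $\det Df(b)\neq 0$ or $Df(b)=0$. In the first case the rescalings $f_r(x)=r\inv(f(b+rx)-f(b))$ have $Df_r\to Df(b)$ in $L^n_{\loc}$ and hence, by the equicontinuity of suitably normalised quasiregular mappings, converge locally uniformly to the invertible linear map $L(x)=Df(b)\,x$, which does not vanish on $\partial B(0,1)$. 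For small $r$, $\deg(f_r,B(0,1),0)=\deg(f,B(b,r),f(b))=i$, while stability of the degree forces $\deg(f_r,B(0,1),0)\to\deg(L,B(0,1),0)=1$; hence $i=1$, contradicting $i\geq 2$. So $Df(b)=0$ at every branch point.

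The crux is the degenerate case $Df(b)=0$, treated by a sharper blow-up. Here the integrability condition on $1/J_f$ from the other main theorem of this paper is what one would invoke to choose a normalising scale $\rho(r)$, of the order of $\diam f(U(b,f,r))$, for which the rescalings $g_r(x)=\rho(r)\inv(f(b+rx)-f(b))$ stay non-degenerate, and then to extract a non-constant quasiregular limit $g_0$ with $K_I(g_0)\leq K_I(f)$ (lower semicontinuity of the inner dilatation) and $0\in\mathcal B_{g_0}$, $i(0,g_0)\geq i$ (persistence of the local index, provided the branching does not disperse in the limit). If in addition a monotonicity or frequency-type identity forces $g_0$ to be positively $1$-homogeneous, then $Dg_0$ is $0$-homogeneous, so the average of $\abs{Dg_0-A}$ over $B(0,r)$ does not depend on $r$, for every constant matrix $A$; applying the Lebesgue-point theorem to $g_0$ at $0$ — legitimate since $0\in\mathcal B_{g_0}$ — forces $Dg_0$ to be a.e. constant, so $g_0$ is affine, hence, being non-constant and quasiregular, a homeomorphism, whence $0\notin\mathcal B_{g_0}$: the contradiction that closes the argument.

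The main obstacle is precisely what the classical theory also leaves open, in two guises. On a normal domain, Väisälä's inequality gives $M(\Gamma)\leq i(b,f)\inv K_I(f)\,M(f\Gamma)$ for the family $\Gamma$ of maximal liftings of the curves joining the boundary components of a round ring about $f(b)$; under $K_I(f)<i(b,f)$ this would contradict a geometric lower bound for $M(\Gamma)$, but the available lower bounds yield only $K_I(f)>c(n)>1$ with $c(n)$ far below $i(b,f)$, and improving $c(n)$ to $i(b,f)$ is equivalent to the conjecture. On the analytic side one lacks a monotonicity formula — and, with it, the strong compactness that would let the inner dilatation and the local index pass to the blow-up limit — for branched quasiregular mappings in dimension $n\geq 3$; the integrability and Lebesgue-point results of this paper are exactly the partial regularity needed to set up such a blow-up, but not yet to carry it through.
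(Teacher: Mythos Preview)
The statement under review is Conjecture~\ref{StrongMartio}, which the paper presents as an open problem; there is no proof in the paper to compare against. Your submission is candid about this and offers a programme rather than a proof, so the honest verdict is that the conjecture remains unproved both in the paper and in your write-up.

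A few comments on the programme itself. Your first reduction, the case $\det Df(b)\neq 0$, is correct but unnecessary: the paper obtains $Df(b)=0$ at every branch point directly from the modulus of continuity estimate \eqref{ModulusOfContinuity}, since under $K_I(f)<i(b,f)$ the H\"older exponent $1/\alpha_f(b)$ exceeds $1$, forcing the differential to vanish. Invoking the Lebesgue-point theorem and then a degree argument to reach the same conclusion is a detour. In the degenerate blow-up, your final step is slightly off: once $Dg_0$ is $0$-homogeneous and the decay estimate of Theorem~\ref{thm:Main2} applies to $g_0$ at the origin, the constant value of $\fint_{B(0,r)}\abs{Dg_0}^s$ must be zero, so $g_0$ is constant, not an affine homeomorphism; the contradiction is with non-constancy, not with $0\in\mathcal B_{g_0}$.

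The genuine gap is exactly the one you name: no monotonicity or frequency formula is known that would force a blow-up limit of a branched quasiregular map in dimension $n\ge 3$ to be homogeneous, and without it neither the existence of a non-constant limit $g_0$ nor the persistence of the local index at the origin is secured. The integrability hypothesis of Theorem~\ref{thm:Main1} is what the paper uses to bypass this obstacle under an additional assumption on $1/J_f$, rather than to set up the blow-up you describe; your proposal does not explain how that integrability would yield the normalising scale $\rho(r)$ or the required compactness. Until that step is supplied, the argument remains a heuristic outline.
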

Above and in what follows $i(x,f) \in \mathbb{Z}$ stands for the \emph{local topological index} of a point $x \in \Omega$ under a quasiregular mapping $f : \Omega \to \R^n$. Note that 
$$i(x,f) \ge 2 \quad \text{for every } x \in \mathcal{B}_f \, ,$$
see \cite[Chapter~I]{Rickman-book} for further details. We also point out that the holomorphic function
\begin{align*}
f : \mathbb{C} \to \mathbb{C}, \quad f(z) = z^m \quad \text{($m\ge 2$)}
\end{align*}
shows the conjecture to fail in dimension two. Moreover, if the conjecture is true, then its extremality can be verified by the standard \emph{$m$-to-1 winding mapping} 
\begin{align*}
(r, \theta, z) \stackrel{w}{\mapsto} (r, m\theta, z) \quad \text{($z \in \R^{n-2}$)} \, ,
\end{align*}
written here in cylindrical coordinates. Indeed, this mapping is branching on an $(n-2)$-dimensional hyperplane and for its inner dilatation we have $K_I(w) = m$. 


The main motivation to Martio's conjecture comes from the generalized Liouville's theorem of Gehring \cite{Gehring1962} and Reshetnyak \cite{Reshetnyak1967} which says that non-constant 1-quasiregular mappings in dimension $n \ge 3$ are restrictions of M\"{o}bius transformations, see also \cite{BojarskiIwaniec1982}. In \cite[Theorem~4.6]{MRV-71} Martio, Rickman and V\"{a}is\"{a}l\"{a} (see also \cite{Goldstein1971}) showed that the local homeomorphism property of $1$-quasiregular mappings holds also for mappings with the dilatation close to one by proving that in each dimension $n \ge 3$ there exists $\varepsilon(n)> 0$ such that every quasiregular mapping 
$$f : \Omega \to \R^n \quad (\text{$\Omega \subset \R^n$ domain with $n \ge 3$})$$
with $K_I(f) < 1+\varepsilon(n)$ is a local homeomorphism. However, the proof in \cite{MRV-71} is implicit and does not give any explicit estimate for the number $\varepsilon(n)>0$. The only explicit bound is given by Rajala in \cite{Rajala-MartioResult}, but it is only slightly larger than zero and it depends on the dimension of the space. 

In this article we gain new information on Martio's conjecture by estimating the Hausdorff dimension of the image of the branch set for quasiregular mappings with a small inner dilatation. Our approach applies the modulus of continuity estimate \cite[Theorem~III.4.7]{Rickman-book}, the well-known lower bound \cite[Proposition~III.5.3]{Rickman-book} for the size of the image of the branch set, and the regularity properties of the local inverses of quasiregular mappings defined outside the image of the branch set. The aforementioned regularity properties depend on the level of the integrability of the reciprocal of the Jacobian determinant of the mapping. By denoting
\begin{align*}
\alpha_f \colonequals \biggl( \frac{K_I(f)}{\inf_{x \in \mathcal{B}_f} i(x,f)} \biggr)^{\frac{1}{n-1}} \quad \text{and} \quad q_f \colonequals \frac{2}{n(1-\alpha_f)} - 1 
\end{align*}
we may state our first main result as follows:
\begin{theorem}\label{thm:Main1}
	Let
	$$f : \Omega \to \R^n \quad (\text{$\Omega \subset \R^n$ domain with $n \ge 2$})$$
	be a non-constant quasiregular mapping such that
	$$K_I(f) < \inf_{x \in \mathcal{B}_f} i(x,f) \, .$$
	If the reciprocal of the Jacobian determinant satisfies
	\begin{align*}
	1/J_f \in L_{\loc}^q(\Omega) \quad \text{for some } q > q_f
	\end{align*}
	then $f$ is a local homeomorphism. Moreover, in the planar case the same statement holds even if $q \ge q_f$. 
\end{theorem}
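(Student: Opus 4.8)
The plan is to argue by contradiction. Suppose $\mathcal{B}_f \neq \emptyset$, fix a branch point $x_0 \in \mathcal{B}_f$, and put $y_0 = f(x_0)$ and $i_0 = i(x_0,f)$; the standing hypothesis $K_I(f) < \inf_{x\in\mathcal{B}_f} i(x,f) \le i_0$ makes $\alpha_f \in (0,1)$ and $q_f < \infty$. All the work will be local: on a small normal neighbourhood $U = U(x_0,f,r)$ the restriction $f|_U \colon U \to V := f(U)$ is proper of degree $i_0$, with $f^{-1}(y_0)\cap U = \{x_0\}$ and $\int_U J_f = i_0\abs{V}$. The idea is to squeeze the size of $f(\mathcal{B}_f)$ near $y_0$ between a lower bound coming from the classical estimates of Rickman and an upper bound coming from the integrability hypothesis, and to check that these are incompatible once $q > q_f$.

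The new ingredient is the regularity of the inverse branches. Over $V\setminus f(\mathcal{B}_f)$ the map $f$ splits into $i_0$ homeomorphic inverse branches $g_1,\dots,g_{i_0}$, and the change of variables formula together with the distortion inequality $\abs{D^{\#}f}^{\,n} \le K_I(f)\,J_f^{\,n-1}$ yields
\begin{align*}
\sum_{j=1}^{i_0}\,\int_{V\setminus f(\mathcal{B}_f)}\abs{Dg_j}^{\,n(1+q)} \,\le\, K_I(f)^{1+q}\int_U J_f^{-q} \,<\,\infty \, .
\end{align*}
Thus each $g_j \in W^{1,n(1+q)}_{\loc}$ with $n(1+q)>n$, so by Morrey's embedding the branches are locally Hölder; since $f(\mathcal{B}_f)$ is closed and of topological dimension at most $n-2$, hence nowhere dense, the $g_j$ extend to continuous injective maps $\bar g_j \colon V \to U$ with $\bar g_j(f(\mathcal{B}_f)) \subseteq \mathcal{B}_f$ and $\bigcup_j \bar g_j(V) = U$. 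In particular $\mathcal{B}_f$ near $x_0$ is a finite union of images, under maps of controlled Sobolev regularity, of $f(\mathcal{B}_f)$ near $y_0$, which bounds the size of $\mathcal{B}_f$ from above in terms of that of $f(\mathcal{B}_f)$; moreover the right‑hand integral above tends to $0$ as $r\to 0$ by absolute continuity of the integral.

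Next I would feed this into the classical circle of estimates. The lower bound \cite[Proposition~III.5.3]{Rickman-book} for the size of $f(\mathcal{B}_f)$ near $y_0$ (which uses that $f(\mathcal{B}_f)$ genuinely has topological dimension $n-2$), together with the modulus of continuity estimate \cite[Theorem~III.4.7]{Rickman-book}, which gives $\diam V \lesssim r^{1/\alpha_f}$ with $1/\alpha_f = (i_0/K_I(f))^{1/(n-1)} > 1$, pin $f(\mathcal{B}_f)$ from below; comparing this with the Sobolev‑regularity upper bound from the previous step, and using $\int_U J_f = i_0\abs{V}$, forces a numerical contradiction once $q > q_f$, so $\mathcal{B}_f = \emptyset$. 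For $n = 2$ the branch set is discrete and the dimension comparison degenerates, so there I would instead estimate $\int_U J_f^{-q}$ directly: writing $f = h\circ\phi$ by Stoilow's factorization (with $\phi$ quasiconformal and $h$ holomorphic with a zero of order $i_0$ at $\phi(x_0)$) and combining a Jensen / change of variables argument with $\int_U J_f \le i_0\abs{V}$ and $\diam V \lesssim r^{1/\alpha_f}$, one checks that $\int_U J_f^{-q} = +\infty$ precisely when $q \ge q_f$; this is why the endpoint $q=q_f$ is admissible in the plane.

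The step I expect to be the main obstacle is calibrating the constants so that precisely $q_f = \frac{2}{n(1-\alpha_f)}-1$, and not merely some larger threshold such as $\frac{1}{1-\alpha_f}-1$, comes out of the comparison in dimension $n\ge 3$: this is where the sharp quantitative form of \cite[Proposition~III.5.3]{Rickman-book} and of the dimension‑distortion of the $W^{1,n(1+q)}$ inverse branches are essential, and where the genuine $(n-2)$‑dimensionality of $f(\mathcal{B}_f)$, rather than a mere Hausdorff‑content bound, must be exploited. Subsidiary but delicate points, though routine, are the change‑of‑variables identity controlling $\sum_j\int\abs{Dg_j}^{n(1+q)}$, the decay of $\int_U J_f^{-q}$ as $r\to 0$, and the continuous extension and injectivity of the inverse branches across $f(\mathcal{B}_f)$, all of which rest on $f(\mathcal{B}_f)$ having topological codimension at least two.
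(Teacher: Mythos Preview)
Your core insight --- that the hypothesis $1/J_f\in L^q_{\loc}$ is equivalent, via change of variables, to $J_{g_j}\in L^{q+1}$ (or $\abs{Dg_j}\in L^{n(1+q)}$) for the local inverse branches, and that this extra regularity of the inverses should collide with Rickman's lower bound $\mathcal{H}^{n-2}(f(\mathcal{B}_f))>0$ --- is exactly what the paper uses. The difference is in the execution, and the gap you yourself flag is real.

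Going through Morrey and dimension distortion of H\"older maps only yields the threshold $\frac{1}{1-\alpha_f}-1$, not $q_f=\frac{2}{n(1-\alpha_f)}-1$; there is no sharper generic Sobolev dimension-distortion theorem that would close this. The paper avoids Morrey entirely and instead runs a direct Hausdorff-content estimate. One covers a compact $F\subset\mathcal{B}_f$ (chosen so that $\mathcal{H}^{n-2}(f(F))>0$) on the image side by Vitali-disjoint balls $B_k=B(y_k,r_k)$, and for each $k$ uses the modulus-of-continuity estimate $\abs{f(x)-f(x_k)}\le C\abs{x-x_k}^{1/\alpha_f}$ to get $B(x_k,cr_k^{\alpha_f+\varepsilon})\subset U_k:=U(x_k,f,r_k)$, hence $r_k^{n(\alpha_f+\varepsilon)}\lesssim \abs{U_k}=\sum_j\int_{B_k}J_{g_j}$. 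Now H\"older on the \emph{Jacobian} (not on $\abs{Dg}$) gives
\[
r_k^{n(\alpha_f+\varepsilon)(q+1)}\;\lesssim\; \abs{B_k}^{q}\sum_j\int_{B_k}J_{g_j}^{\,q+1}
\;=\; r_k^{nq}\int_{U_k}\frac{1}{J_f^{\,q}}\,,
\]
so $r_k^{\,n(\alpha_f+\varepsilon)(q+1)-nq}\lesssim\int_{U_k}1/J_f^{\,q}$. Choosing $\varepsilon>0$ so that the exponent equals $n-2$ (which is possible exactly when $q>q_f$) and summing over $k$ yields $\mathcal{H}^{n-2}_\delta(f(F))\lesssim\int_{\{\dist(\cdot,\mathcal{B}_f)<2\delta\}}1/J_f^{\,q}\to 0$, contradicting $\mathcal{H}^{n-2}(f(F))>0$. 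The volume comparison $r_k^{n\alpha_f}\lesssim\abs{U_k}$ is what recovers the factor $2/n$ that Morrey loses.

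Two further points. First, the continuous extension of the $g_j$ across $f(\mathcal{B}_f)$ is both unnecessary and not obviously valid: for $p>n$ no nonempty set is $W^{1,p}$-removable, and you would need at least $\mathcal{H}^{n-1}(f(\mathcal{B}_f))=0$ to push weak derivatives across via ACL. The paper never extends; it works on the open complement and uses that $\mathcal{H}^n(f(\mathcal{B}_f))=0$. Second, your planar argument is not the paper's and is shaky as stated: you cannot conclude $\int_U J_f^{-q}=+\infty$ from Sto\"ilow alone, because after factoring $f=h\circ\phi$ you have no a~priori control on $J_\phi^{-q}$. The paper uses Sto\"ilow only to know $\mathcal{B}_f$ is discrete; it then runs the same covering argument as above around a single branch point, where discreteness lets one take $\varepsilon=0$ and obtain the endpoint $q=q_f$.
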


A simple construction shows that the integrability condition of the reciprocal of the Jacobian determinant in Theorem~\ref{thm:Main1} is actually sharp in the planar case, see Example~\ref{ex:Sharpness}. Furthermore, Theorem~\ref{thm:Main1} can be considered as a continuation of the author's earlier joint work \cite{KLT3} with Kauranen and Luisto where the strong Martio's conjecture was verified for \emph{mappings of bounded length distortion} (abbr. \emph{BLD-mappings}). To see this we recall that a mapping of bounded length distortion can be defined as a Lipschitz mapping
$$f : \Omega \to \R^n \quad \text{($\Omega \subset \R^n$ domain with $n \ge 2$)}$$
such that
$$J_f(x) > c \quad \text{for almost every } x \in \Omega \, ,$$
where $c > 0$ is some positive constant. Especially, these mappings form a subclass of quasiregular mappings. Thus, Theorem~\ref{thm:Main1} improves the result in \cite{KLT3} by relaxing the boundedness conditions of the Jacobian determinant. In this article we also obtain several alternative proofs for the local homeomorphism property of BLD-mappings with a small inner dilatation, see Remark~\ref{BLDRemark1} and~\ref{remark:Dimension}. For further information about the local injectivity properties of BLD-mappings we refer to \cite{GutlyanskiMartioRyazanovVuorinen2000,HeinonenKilpelainen2000, MartioVaisala}. All these results should be contrasted with the well-known inverse function theorem which states that every continuously differentiable mapping is a local diffeomorphism outside its zero set of the Jacobian determinant, see e.g. \cite{KrantzParks2013} for some further details on this well-known result.

There are also several other important partial results in the direction of Martio's conjecture. First of all, the conjecture is known to be true if the branch set contains any non-constant rectifiable arc, see e.g. \cite[p.~76]{Rickman-book}. Also every sufficiently smooth quasiregular mapping is a local homeomorphisms even if its inner dilatation is not considered small, see \cite{BonkHeinonen-Smooth, KaufmanTysonWu2005}. Local injectivity of quasiregular mappings also follows if the \emph{dilatation tensor}
\begin{displaymath}
G_f(x) = \left\{ \begin{array}{ll}
\frac{Df(x)^T Df(x)}{J_f(x)^{2/n}}, & \textrm{if $J_f(x)>0$}\\
\text{I}\, , & \textrm{otherwise},
\end{array} \right.
\end{displaymath}
is approximately continuous or is close to some continuous matrix-valued function in BMO or VMO, see \cite[Theorem~3.22]{GutlyanskiMartioRyazanovVuorinen1998} and \cite[Theorem~4.1 and~5.1]{MartioRyazanovVuorinen1999}. Most of the arguments that apply the properties of the dilatation tensor to study Martio's conjecture use a similar kind of implicit compactness argument as in \cite{MRV-71} combined with the fact that a quasiregular mapping is locally invertible at each point $x_0 \in \Omega$ where the condition
\begin{align*}
\lim_{r \to 0} \fint_{B(x_0,r)} \abs{G_f(z)-\fint_{B(x_0,r)} G_f(x) \, dx} \, dz = 0 
\end{align*}
holds, provided that $n \ge 3$. Especially, it follows that if $G_f \in W^{1,p}$ for some $p \in [1,n]$ then
\begin{align}\label{eq:SizeOfBranch}
\mathcal{H}^{n-p}(\mathcal{B}_f) = 0.
\end{align}
For further discussion in this direction we refer to \cite{HeinonenKilpelainen2000}. If one wishes to study Martio's conjecture in terms of the dilatation tensor then the following theorem might turn out to be useful as we see in Remark~\ref{BLDRemark1}.(3):

\begin{theorem}\label{thm:Main2}
Let
$$f : \Omega \to \R^n \quad (\text{$\Omega \subset \R^n$ domain with $n \ge 2$})$$
be a non-constant $K$-quasiregular mapping such that
$$K_I(f) < i(x_0,f) \quad \text{for a given } x_0 \in \mathcal{B}_f.$$
Then there exist 
$$p = p(n,K) > n \quad \text{and} \quad r_0>0$$ 
such that for every exponent $0 < s < p(n,K)$ and every radius $0 < r < r_0$ we have
$$\fint_{B(x_0,r)} \abs{Df(x)-Df(x_0)}^s \, dx \le C_{x_0} r^{s(\mu-1)},$$
where $\mu = (i(x_0,f)/K_I(f))^{1/(n-1)}$ and $C_{x_0} \colonequals C_{x_0}(n,K,s)$.
\end{theorem}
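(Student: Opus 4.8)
The plan is to derive the decay estimate from two ingredients: the modulus-of-continuity estimate for quasiregular mappings and the higher integrability of $Df$ and $1/J_f$ near an isolated piece of the branch set. First I would normalize by assuming $x_0 = 0$ and $f(0) = 0$. By \cite[Proposition~III.5.3]{Rickman-book} and the hypothesis $K_I(f) < i(0,f)$, together with the well-known lower bound for the local index, the image $f(B(0,r))$ contains a ball whose radius is comparable to the \emph{maximal} growth $L(0,f,r)^{\mu}$-type quantity; more precisely, the local behaviour of $f$ at the branch point is controlled from below and above by powers of $r$ with exponent $\mu = (i(0,f)/K_I(f))^{1/(n-1)} > 1$. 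Concretely, from \cite[Theorem~III.4.7]{Rickman-book} one obtains the two-sided estimate
\begin{equation*}
C^{-1} r^{\mu} \le \inf_{|x| = r} |f(x)| \le \sup_{|x| = r} |f(x)| \le C r,
\end{equation*}
valid for $0 < r < r_0$, where $r_0$ is chosen so that $B(0,2r_0) \Subset \Omega$ and $0$ is the only branch point of $f$ with index $\ge 2$ realizing the infimum in $K_I(f) < i(0,f)$ on that scale (such $r_0$ exists since $\mathcal{B}_f$ is closed and the index is upper semicontinuous). The key point is that $\mu > 1$ forces $f$ to be ``super-linearly contracting'' as one zooms into $x_0$, which is exactly what produces the $r^{s(\mu-1)}$ gain.

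Next I would extract the integrability exponent $p = p(n,K) > n$ from the self-improving (Gehring-type) higher integrability of quasiregular mappings: there is $p = p(n,K) > n$ such that $Df \in L^p_{\loc}$, and moreover, by the reverse-Hölder inequality applied on the balls $B(0,r)$, one has the scale-invariant bound
\begin{equation*}
\Bigl( \fint_{B(0,r)} |Df|^p \Bigr)^{1/p} \le C \fint_{B(0,2r)} |Df| \le C' \frac{L(0,f,2r)}{r} \le C'' r^{\mu - 1},
\end{equation*}
where the middle inequality is the standard Sobolev/telescoping estimate relating the average of $|Df|$ to $\sup_{|x|=2r}|f(x)|/r$ and the last one uses the upper bound from the modulus-of-continuity estimate refined to $\sup_{|x|=r}|f(x)| \le C r^{\mu}$ (which is the correct statement at a branch point of index $i(0,f)$ when the inner dilatation is small; this is where the precise version of \cite[Theorem~III.4.7]{Rickman-book} enters). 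Then $Df(x_0) = 0$ — indeed, a branch point with $\mu > 1$ cannot have invertible derivative, and in fact $|f(x)| \lesssim |x|^{\mu}$ with $\mu>1$ forces $Df(0) = 0$ at every approximate-differentiability point, so $|Df(x) - Df(x_0)| = |Df(x)|$. Finally, for $0 < s < p$, Hölder's inequality gives
\begin{equation*}
\fint_{B(0,r)} |Df(x) - Df(x_0)|^s \, dx = \fint_{B(0,r)} |Df(x)|^s \, dx \le \Bigl( \fint_{B(0,r)} |Df|^p \Bigr)^{s/p} \le C_{x_0} r^{s(\mu - 1)},
\end{equation*}
which is the claimed bound, with $C_{x_0} = C_{x_0}(n,K,s)$ depending only on $n$, $K$, $s$ (and the fixed data at $x_0$, absorbed into $r_0$).

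The main obstacle — and the step I would spend the most care on — is justifying the \emph{upper} growth bound $\sup_{|x|=r}|f(x)| \le C r^{\mu}$ near the branch point, i.e. that the correct exponent governing $f$ at $x_0$ is $\mu$ and not merely $1$. The lower bound $\inf_{|x|=r}|f(x)| \gtrsim r^{\mu}$ is the content of \cite[Proposition~III.5.3]{Rickman-book} (the size-of-the-image estimate), but for the matching upper bound one needs the sharp form of the modulus-of-continuity estimate at a point of prescribed local index, combined with the hypothesis $K_I(f) < i(x_0,f)$: the point is that if $f$ grew strictly faster than $r^{\mu}$ along some direction while still covering $i(x_0,f)$ sheets, the inner-modulus inequality would be violated. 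Making this quantitative requires carefully pairing the path-family modulus estimates with the multiplicity $i(x_0,f)$, and verifying that the constants and the radius $r_0$ can be chosen depending only on $n$, $K$ (and the discrete data $i(x_0,f)$, $K_I(f)$ attached to the fixed point $x_0$). Once that two-sided power-law control at $x_0$ is in hand, the reverse-Hölder step and the final Hölder estimate are routine.
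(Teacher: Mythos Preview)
Your overall strategy --- combine the Hölder-continuity bound $|f(x)| \le C|x|^{\mu}$ at the branch point with an $L^s$-to-$L^\infty$ Caccioppoli-type control of $|Df|$ by $|f|$ --- is exactly the paper's strategy, but two things in your write-up need to be corrected, and one step is an unnecessary detour.

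\textbf{The ``Sobolev/telescoping'' step is a genuine gap.} The inequality $\fint_{B_{2r}} |Df| \le C\, L(0,f,2r)/r$ is \emph{false} for general Sobolev maps (radially oscillating examples give arbitrarily large $\fint|Df|$ with bounded $\sup|f|$), and no telescoping/Poincar\'e argument produces it --- those go the opposite direction. What makes it true here is precisely the Caccioppoli inequality for quasiregular maps,
\[
\| \phi\, Df \|_{L^s} \le C(n,K,s)\, \| f \otimes \nabla\phi \|_{L^s}, \qquad q(n,K) < s < p(n,K),
\]
from \cite[Theorem~14.4.1]{IwaniecMartin}, which (with a cutoff $\phi$ supported on $B_{2r}$, $|\nabla\phi|\le 1/r$) gives $\int_{B_r}|Df|^s \le C r^{-s}\int_{B_{2r}}|f|^s$. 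This is the mechanism the paper uses, and it \emph{is} the missing ingredient in your middle inequality. Once you invoke it, your reverse-H\"older step becomes superfluous: the Caccioppoli inequality already holds for every $s$ up to $p(n,K)$, so you get $\fint_{B_r}|Df|^s \le C r^{s(\mu-1)}$ directly for $q(n,K)<s<p(n,K)$, and then H\"older handles $0<s\le q(n,K)$.

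\textbf{You are worrying about the wrong step.} The upper bound $\sup_{|x|=r}|f(x)| \le C r^{\mu}$ is not a delicate refinement to be fought for; it is exactly the statement of \cite[Theorem~III.4.7]{Rickman-book} at a point with $i(x_0,f)>K_I(f)$, with exponent $1/\alpha_f(x_0) = \mu$. The two-sided estimate you write, the lower bound, the citation of Proposition~III.5.3, and the integrability of $1/J_f$ play no role here (they belong to the proof of the other main theorem). Drop them. The proof is then two lines: (i) $|f(x)|\le C|x|^\mu$ from Rickman, hence $Df(x_0)=0$; (ii) Caccioppoli with a cutoff gives $\fint_{B_r}|Df|^s \le C r^{-s}\fint_{B_{2r}}|f|^s \le C r^{s(\mu-1)}$.
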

In addition to the above-mentioned results, some additional information and related results to Martio's conjecture can be found from \cite{HeinonenICM, Iwaniec1987, IwaniecMartin1993, MartioRickman, Rickman85} and \cite{QuasiconformalSpaceMappings}.

\section*{\textbf{Acknowledgments}} The author wishes to thank Jani Onninen for introducing him the technique applied here to prove Proposition~\ref{lemma:QRMartioLebesgue}. In addition, the author wishes to express his gratitude to Kai Rajala for pointing out that Theorem~\ref{thm:Main1} is valid also in the planar case as well as for commenting earlier versions of this article. Finally, he would like to thank Xiao Zhong for his support and encouragement throughout the project as well as Aapo Kauranen and Rami Luisto for several fruitful discussions during the project.

\section{Decay of the differential matrix at the branch points and the proof of Theorem~\ref{thm:Main2}}\label{sec:LebesguePoints}

Recall that if
$$f : \Omega \to \R^n \quad \text{($\Omega \subset \R^n$ domain with $n \ge 2$)}$$
is a non-constant quasiregular mapping then by \cite[Theorem~III.4.7]{Rickman-book} for each $x \in \Omega$ there exist positive numbers $r(x)$ and $C_I(x)$ such that
\begin{align}\label{ModulusOfContinuity}
\abs{f(x)-f(y)} \le C_I(x) \abs{x-y}^{1/\alpha_f(x)} \quad \text{for every } y \in B(x,r(x)) \, ,
\end{align}
where $\alpha_f(x) \colonequals \bigl( K_I(f)/i(x,f) \bigr)^{1/(n-1)}$. When 
$$K_I(f) < i(x,f) \quad \text{for all $x \in \mathcal{B}_f$} \, ,$$ 
the upper estimate in \eqref{ModulusOfContinuity} implies that
\begin{align}\label{BranchSetInclusion}
\mathcal{B}_f \subset \mathcal{D}_f \cap \mathcal{C}_f ,
\end{align}
where
\begin{align*}
\mathcal{D}_f \colonequals \{ x \in \Omega : f \text{ is differentiable at } x \}
\end{align*}
is the \emph{set of differentiability of $f$} and
\begin{align*}
\mathcal{C}_f \colonequals \{ x \in \Omega : \abs{Df(x)} = 0 \}
\end{align*}
is the \emph{critical set of $f$}. Usually the set $\mathcal{D}_f \cap \mathcal{C}_f$ is much larger than the branch set $\mathcal{B}_f$, and this is the case even for quasiregular mappings with a small dilatation. Indeed, by \cite[Final remarks 23]{GehringVaisala1973} for every $\varepsilon > 0$ there exists a $(1+\varepsilon)$-quasiconformal mapping $f : \Omega \to \R^n$ such that the Hausdorff dimension of the intersection $\mathcal{D}_f \cap \mathcal{C}_f$ satisfies 
$$\dim_H(\mathcal{D}_f \cap \mathcal{C}_f) = n.$$
On the other hand, by \cite[Theorem~1.3]{BonkHeinonen-Smooth} there exists $\eta(n,K)>0$ such that
$$\dim_H \mathcal{B}_f \le n-\eta(n,K)$$
for every $K$-quasiregular mapping $f : \Omega \to \R^n$, see also \cite{OnninenRajala}. In other words, under the assuptions of the strong Martio's conjecture the inclusion in \eqref{BranchSetInclusion} could in principle be strict. Therefore, it is not enough to analyze the critical set in order to provide information about the geometric properties of the branch set for quasiregular mappings with a small dilatation. 

Despite these difficulties it turns out that under the assumptions of the strong Martio's conjecture we can still estimate the decay of $\abs{Df}$ at the branch points. For this purpose, let us recall that by \cite[Theorem~14.4.1]{IwaniecMartin} every $K$-quasiregular mapping
$$f : \Omega \to \R^n \quad \text{($\Omega \subset \R^n$ domain with $n \ge 2$)}$$
belongs to Sobolev space $W_{\loc}^{1,s}(\Omega, \R^n)$ for all $s$ with
\begin{align*}
q(n,K) \colonequals \frac{n \lambda K}{\lambda K+1} < s < \frac{n \lambda K}{\lambda K-1} =: p(n,K),
\end{align*}
for some $\lambda = \lambda(n) \ge 1$. The constant $\lambda$ is commonly conjectured to equal one, see e.g. \cite[p. 164]{IwaniecMartin}. Furthermore, for each test function $\phi \in C_0^{\infty}(\Omega)$ the  Caccioppoli-type inequality 
\begin{align*}
\norm{\phi Df}_{L^s} \le C(n,K, s) \norm{f \otimes \nabla \phi}_{L^s}
\end{align*}
holds whenever $q(n,K)< s < p(n,K)$. This way we obtain the following result.

\begin{proposition}\label{lemma:QRMartioLebesgue}
	Let 
	$$f : \Omega \to \R^n \quad (\text{$\Omega \subset \R^n$ domain with $n \ge 2$})$$ 
	be a non-constant $K$-quasiregular mapping such that 
	$$K_I(f) < i(x_0,f) \quad \text{at a point } x_0 \in \mathcal{B}_f$$
	and suppose that $q(n,K) < s < p(n,K)$.
	Then there exist constants 
	$$C_{x_0} \colonequals C_{x_0}(K,n,s)>0 \quad \text{and} \quad r_0 \colonequals r_0(x_0) > 0$$ 
	such that
	\begin{align*}
	\fint_{B(x_0,r)} \abs{Df(x_0)-Df(x)}^s  \, dx \le C_{x_0}(K,n,s) r^{s(\mu-1)} \quad \text{whenever } 0 < r < r_0 \, ,
	\end{align*}
	where $\mu \colonequals (i(x_0,f)/K_I(f))^{1/(n-1)}$.
\end{proposition}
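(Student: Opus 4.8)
The plan is to combine two ingredients: the modulus of continuity estimate \eqref{ModulusOfContinuity} at the point $x_0$, which captures how fast $f$ shrinks near a branch point when $K_I(f) < i(x_0,f)$, with the Caccioppoli-type inequality for the higher integrability of $Df$. First I would recall from \eqref{BranchSetInclusion} that $x_0 \in \mathcal{D}_f \cap \mathcal{C}_f$, so $f$ is differentiable at $x_0$ with $Df(x_0) = 0$. (This is the crucial simplification: the quantity $Df(x_0) - Df(x)$ is just $-Df(x)$.) Hence the claimed estimate reduces to
$$
\fint_{B(x_0,r)} \abs{Df(x)}^s \, dx \le C_{x_0} \, r^{s(\mu-1)} ,
$$
with $\mu = \alpha_f(x_0)\inv > 1$, i.e. $1/\alpha_f(x_0) = \mu$.

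Next I would fix $r_0 = r(x_0)/4$ (or a similar fraction) from \eqref{ModulusOfContinuity}, so that on a ball $B(x_0, 2r)$ with $2r < r(x_0)$ we control $\abs{f(x) - f(x_0)} \le C_I(x_0) \abs{x-x_0}^{\mu}$. Applying the Caccioppoli inequality on the ball $B(x_0,r)$ with a standard cutoff $\phi \in C_0^\infty(B(x_0,2r))$ satisfying $\phi \equiv 1$ on $B(x_0,r)$ and $\abs{\nabla\phi} \lesssim 1/r$, I get
$$
\int_{B(x_0,r)} \abs{Df(x)}^s \, dx \le C(n,K,s)\, r^{-s} \int_{B(x_0,2r)} \abs{f(x) - f(x_0)}^s \, dx ,
$$
where I have used that $f$ may be replaced by $f - f(x_0)$ (constants do not affect $Df$), so that $f \otimes \nabla\phi$ becomes $(f - f(x_0)) \otimes \nabla \phi$. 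Now I plug in the pointwise bound $\abs{f(x) - f(x_0)}^s \le C_I(x_0)^s \abs{x - x_0}^{s\mu}$ and integrate: since $s\mu > 0$, $\int_{B(x_0,2r)} \abs{x-x_0}^{s\mu}\,dx \lesssim r^{s\mu + n}$. Dividing by $\abs{B(x_0,r)} \sim r^n$ gives the averaged bound $\lesssim r^{-s} \cdot r^{s\mu + n} \cdot r^{-n} = r^{s(\mu-1)}$, which is exactly the claim, with $C_{x_0}$ absorbing $C(n,K,s)$, $C_I(x_0)$, and dimensional constants.

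I do not expect a serious obstacle here; the argument is short once the two quoted tools are in hand. The one point requiring a little care is the legitimacy of applying the Caccioppoli inequality with $f$ replaced by $f - f(x_0)$ — this is immediate since the inequality only involves $Df$ on the left (and $D(f - f(x_0)) = Df$) and $\nabla\phi$ is compactly supported, so the constant shift contributes the term $\abs{f(x) - f(x_0)}\abs{\nabla\phi(x)}$ on the right. One should also check that the exponent range $q(n,K) < s < p(n,K)$ is exactly where the Caccioppoli inequality is available, which is precisely the hypothesis of the proposition, and note that $p(n,K) > n$ since $\lambda K \ge 1$, which is what feeds into the statement of Theorem~\ref{thm:Main2}. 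Everything else is routine bookkeeping with radii and constants.
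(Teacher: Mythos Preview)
Your proposal is correct and follows essentially the same route as the paper: observe $Df(x_0)=0$ from the modulus of continuity estimate, apply the Caccioppoli-type inequality with a standard cutoff on $B(x_0,2r)$ to bound $\int_{B(x_0,r)}\abs{Df}^s$ by $r^{-s}\int_{B(x_0,2r)}\abs{f-f(x_0)}^s$, and then insert the pointwise bound $\abs{f(x)-f(x_0)}\le C_{x_0}\abs{x-x_0}^{\mu}$. The only cosmetic difference is that the paper normalizes $x_0=f(x_0)=0$ instead of writing $f-f(x_0)$ explicitly.
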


\begin{proof}
	By \eqref{ModulusOfContinuity} the mapping $f$ is differentiable at the point $x_0 \in \mathcal{B}_f$ and 
	$$Df(x_0) = 0.$$ 
	Without loss of generality, we may assume that $x_0=f(x_0)=0$. For each $r>0$ define $\varphi_r : \R^n \to \R$,
	\begin{displaymath}
	\varphi_r(x) = \left\{ \begin{array}{ll}
	1, & \textrm{if $0 \le \abs{x} \le r$}\\
	1-\frac{\abs{x}-r}{r}, & \textrm{if $r < \abs{x} < 2r$}\\
	0, & \textrm{if $\abs{x} \ge 2r$}
	\end{array} \right.
	\end{displaymath}
	to be the standard cut-off function. If we fix an exponent $s$ such that
	$$q(n,K) < s < p(n,K),$$
	then by \cite[Theorem~14.4.1]{IwaniecMartin} we get
	\begin{align}\label{eq:Lebesgue1}
	\int_{B_{2r}} \abs{Df(x_0)-Df(x)}^s \, dx &\le C(K,n,s ) \int_{B_{2r}} \abs{f(x)}^s \abs{\nabla \varphi(x)}^s \, dx\\
	&= C(K,n,s)r^{-s}\int_{B_{2r}} \abs{f(x)}^s  \, dx. \nonumber
	\end{align}
	By \cite[Theorem~III.4.7]{Rickman-book} we find $C_{x_0}>0$ and $r_0>0$ such that
	\begin{align}\label{eq:Lebesgue2}
	\abs{f(x)} \le C_{x_0} \abs{x}^{\mu}
	\end{align}
	for all $x \in B(0,r_0)$. Therefore, by combining \eqref{eq:Lebesgue1} and \eqref{eq:Lebesgue2} we have
	\begin{align*}
	\fint_{B_{r}} \abs{Df(x_0)-Df(x)}^s \, dx &\le C_{x_0}(K,n,s)  r^{s(\mu-1)} \quad \text{for all } x \in B(x_0, r_0) \, ,
	\end{align*}
	and the claim follows.
\end{proof}

Now Theorem~\ref{thm:Main2} is a direct consequence of Proposition~\ref{lemma:QRMartioLebesgue} and H\"{o}lder's inequality.

\begin{proof}[Proof of Theorem~\ref{thm:Main2}]
Fix $s \in (0,p(n,K))$. We may assume that 
$$0 < s \le q(n,K)$$ 
as otherwise the claim follows from Proposition~\ref{lemma:QRMartioLebesgue}. By applying Hölder's inequality and Proposition~\ref{lemma:QRMartioLebesgue} we get
\begin{align*}
\fint_{B(x_0,r)} \abs{Df(x_0)-Df(x)}^s  \, dx &\le  \Biggl(\fint_{B(x_0,r)} \abs{Df(x_0)-Df(x)}^{n}  \, dx \biggr)^{\frac{s}{n}} \\
&\le  C_{x_0}(K,n,s) r^{s(\mu-1)},
\end{align*}
and the claim follows.
\end{proof}

Note that as a consequence of Theorem~\ref{thm:Main2} every branch point of a given non-constant quasiregular mapping with the assumptions of the strong Martio's conjecture is a Lebesgue point of the differential matrix of the mapping. In the BLD-setting this is already enough to prove the conjecture as we see in the following remark.

\begin{remark}\label{BLDRemark1} Suppose that
	$$f : \Omega \to \R^n \quad \text{($\Omega \subset \R^n$ domain with $n \ge 2$)}$$
	is a mapping with a bounded length distortion such that 
	$$K_I(f) < \inf_{x \in \mathcal{B}_f} i(x,f) \, .$$
	Then we observe the following.
	\begin{itemize}
		\item[(1)] One can use the upper estimate in \eqref{ModulusOfContinuity} and radiality properties of BLD-mappings to show that $f$ is a local homemorphism, see \cite{KLT3}.
		\item[(2)] Proposition~\ref{lemma:QRMartioLebesgue} implies the local homeomorphism property of $f$. Indeed, for every BLD-mapping we may find a constant $c>0$ such that
		$$J_f(x) > c \quad \text{for almost every } x \in \Omega.$$
		Therefore, Proposition~\ref{lemma:QRMartioLebesgue} can hold for $f$ only if $\mathcal{B}_f = \emptyset$.
		\item[(3)] If $x_0 \in \mathcal{B}_f$ then we may use Theorem~\ref{thm:Main2} to show 
		\begin{align*}
		\lim_{r \to 0} \fint_{B(x_0,r)} \abs{G_f(x)} \, dx = 0, 
		\end{align*}
		which contradicts the fact that $\abs{G_f(x)} \ge 1$ almost everywhere. This is also enough to show that $\mathcal{B}_f = \emptyset$.
	\end{itemize}
Especially, each of the observations above imply the strong Martio's conjecture for BLD-mappings.
\end{remark}

\section{Proof of Theorem~\ref{thm:Main1} and an example for its sharpness in the planar case}

In order to prove Theorem~\ref{thm:Main1} let us first introduce the notation and the terminology used in the proof. We start by recalling that a domain $U \subset \subset \Omega$ is called a \emph{normal domain} of a quasiregular mapping
$$f : \Omega \to \R^n \quad \text{($\Omega \subset \R^n$ domain with $n \ge 2$)}$$
if
$$f(\partial U) = \partial f(U) \, .$$
If a normal domain $U$ satisfies
$$U \cap f^{-1}(f(x)) = \{ x\},$$
then it is called a \emph{normal neighborhood} of $x$. In addition, for a given point $x \in \Omega$ we denote by $U(x,f,r)$ the $x$-component of the preimage $f^{-1}(B(f(x),r))$. The following standard lemma from \cite[Lemma~I.4.9]{Rickman-book} is needed for the proof.
\begin{lemma}\label{Lemma:NormalDomain}
	Let
	$$f : \Omega \to \R^n \quad (\text{$\Omega \subset \R^n$ domain with $n \ge 2$})$$
	be a non-constant quasiregular mapping. Then for every $x \in \Omega$ there exists a radius $r_x > 0$ for which $U(x,f,r)$ is a normal neighborhood of $x$ such that
	$$f(U(x,f,r)) = B(f(x),r) \quad \text{for every } 0 < r \le r_x.$$
	Moreover, we have
	$$\diam U(x,f,r) \to 0 \quad \text{as } r \to 0.$$
	One may also replace the balls in the statement by cubes.
\end{lemma}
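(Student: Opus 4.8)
The plan is to derive everything from the \emph{Reshetnyak discreteness--openness theorem}, by which a non-constant quasiregular mapping is discrete and open (see \cite[Chapter~I]{Rickman-book}, \cite{Reshetnyak67}). Discreteness means $f^{-1}(f(x))$ has no accumulation point in $\Omega$, so I would first fix $\rho>0$ with $\overline{B(x,\rho)}\subset\Omega$ and $\overline{B(x,\rho)}\cap f^{-1}(f(x))=\{x\}$. Then $f(\partial B(x,\rho))$ is compact and omits $f(x)$ (a preimage of $f(x)$ on $\partial B(x,\rho)$ would contradict the choice of $\rho$), so there is $r_x>0$ with $B(f(x),r_x)\cap f(\partial B(x,\rho))=\emptyset$. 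I claim this $r_x$ works, noting already that $U(x,f,r)$ is nondecreasing in $r$ because $B(f(x),r)$ is.

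Fix $0<r\le r_x$ and set $U=U(x,f,r)$. First I would check $U\subset B(x,\rho)$: $U$ is connected, contains $x$, and cannot meet $\partial B(x,\rho)$ since $\abs{f(z)-f(x)}\ge r_x\ge r$ there; hence $\overline{U}$ is a compact subset of $\Omega$, so $U\subset\subset\Omega$, and $U\cap f^{-1}(f(x))=\{x\}$ follows from the choice of $\rho$. Next I would locate the boundary values: $U$ is a connected component of the open set $f^{-1}(B(f(x),r))$, hence relatively closed in it, so $\partial U$ (which lies in $\Omega$ since $\overline{U}\subset\overline{B(x,\rho)}$) is disjoint from $f^{-1}(B(f(x),r))$; combining $\abs{f(z)-f(x)}\ge r$ for $z\in\partial U$ with continuity from inside $U$ gives $f(\partial U)\subset\partial B(f(x),r)$. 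Then $f(U)=B(f(x),r)$ by a connectedness argument: $f(U)$ is open because $f$ is open, and relatively closed in $B(f(x),r)$ because if $f(x_k)\to y\in B(f(x),r)$ with $x_k\in U$, a subsequence of $x_k$ converges to some $x'\in\overline{U}$ with $f(x')=y$, and $x'\notin\partial U$ since $\abs{y-f(x)}<r$ while $f(\partial U)\subset\partial B(f(x),r)$, so $x'\in U$ and $y\in f(U)$; as $B(f(x),r)$ is connected this forces $f(U)=B(f(x),r)$ and hence $\partial f(U)=\partial B(f(x),r)$. The reverse inclusion $\partial B(f(x),r)\subset f(\partial U)$, needed for the normal-domain property, follows the same way: approximate $y\in\partial B(f(x),r)$ by points of $f(U)$, lift to $U$, and pass to a subsequential limit, which must now lie in $\partial U$.

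Finally, for $\diam U(x,f,r)\to 0$ I would argue by contradiction via the monotonicity: if the limit were positive there is $\eps>0$ with $\diam U(x,f,r)\ge\eps$ for all small $r$; then each such $U(x,f,r)$, being connected, containing $x$ and contained in the fixed ball $B(x,\rho)$, meets the sphere $\partial B(x,\eps/2)$, so the nested intersection $F=\bigcap_{k}\overline{U(x,f,1/k)}$ (over $k$ with $1/k\le r_x$) is a compact connected set meeting $\partial B(x,\eps/2)$ and therefore $F\ne\{x\}$; on the other hand every $z\in F$ satisfies $\abs{f(z)-f(x)}\le 1/k$ for all $k$, whence $F\subset\overline{B(x,\rho)}\cap f^{-1}(f(x))=\{x\}$, a contradiction. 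The cube version is obtained verbatim, replacing $B(f(x),r)$ by the concentric cube (still a nested family of connected open neighbourhoods shrinking to $f(x)$) and $\partial B(x,\rho)$ by a fixed closed cube around $x$. The only step demanding real care is identifying $f(\partial U)$ with $\partial B(f(x),r)$, i.e.\ controlling where the boundary of the \emph{chosen} preimage component goes; once discreteness and openness of $f$ are available the rest is soft point-set topology, which is why this can be quoted as a standard lemma.
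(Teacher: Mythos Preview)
Your argument is correct and is essentially the standard proof of this fact. The paper does not supply its own proof of this lemma; it simply records it as ``the following standard lemma from \cite[Lemma~I.4.9]{Rickman-book}'' and uses it as a black box. Your derivation from Reshetnyak's discreteness--openness theorem is precisely the argument behind that reference: pick $\rho$ so that $\overline{B(x,\rho)}$ isolates the fibre point, choose $r_x=\dist(f(x),f(\partial B(x,\rho)))$, and then run the open/closed-in-connected argument to identify $f(U)$ and $f(\partial U)$. The diameter statement and the cube variant are handled exactly as you indicate. One cosmetic remark: in the contradiction step you can take $\eps$ strictly smaller than the (existing, by monotonicity) limit of $\diam U(x,f,r)$, so that each $U(x,f,r)$ genuinely exits $\overline{B(x,\eps/2)}$ and hence meets the sphere; this avoids the borderline $\diam U=\eps$ case.
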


\begin{proof}[Proof of Theorem~\ref{thm:Main1}] We prove the planar and the higher dimensional cases separately. Note that, beacause we do not have a good global control on the constants $C_I(x) > 0$ in the modulus of continuity estimate \eqref{ModulusOfContinuity}, our proof gives a slightly weaker outcome in higher dimensions.
	
\subsubsection*{A) Proof of the planar case:} Let us assume that the inner dilatation of a non-constant quasiregular mapping
$$f : \Omega \to \R^2 \quad \text{($\Omega \subset \R^2$ domain)}$$
satisfies the assumption
$$K_I(f) < \inf_{x \in \mathcal{B}_f} i(x,f) \, ,$$
and that for the reciprocal of the Jacobian determinant we have
$$1/J_f \in L_{\loc}^q(\Omega) \quad \text{for some } q \ge q_f \, .$$
For the contradiction suppose that $\mathcal{B}_f \neq \emptyset$. Without loss of generality, we may assume that
$$0 \in \mathcal{B}_f \quad \text{and} \quad f(0)=0, \quad \text{and denote} \quad m \colonequals i(0,f).$$
By~\eqref{ModulusOfContinuity} and Lemma~\ref{Lemma:NormalDomain} there exist a radius $r_0 > 0$ and a constant $C_0 > 0$ such that  $U(0,f,r)$ is a normal neighborhood of the origin for every radius $0 < r \le r_0$ and	
\begin{align}\label{eq:ContinuityPlanarProof}
\abs{f(x)} \le C_0 \abs{x}^{1/\alpha_f} \quad \text{whenever } \abs{x} < r_0.
\end{align}
Moreover, by Sto\"{\i}low factorization theorem the branch set of a planar quasiregular mapping is always a discrete set and therefore we may assume that 
$$\mathcal{B}_f \cap U(0,f,r) = \{ 0 \}$$ 
for all $0 < r \le r_0$. Especially, if we denote $\tilde{r}_0 \colonequals r_0^{1/\alpha_f}$ then it follows from \eqref{eq:ContinuityPlanarProof} that
\begin{align}\label{eq:SomethingNewForMe}
B(0,r^{\alpha_f}/C_0^{\alpha_f}) \subset U(0,f,r) \quad \text{whenever } 0 < r < \tilde{r}_0.
\end{align} 

Next, fix a radius 
$$0 < r < \min \{ \tilde{r}_0, 1 \}$$
and denote $B_r \colonequals B(0,r)$. By applying Lemma~\ref{Lemma:NormalDomain} we cover $B_r \setminus \{ 0\}$ by a countable collection of closed squares $\{\overline{Q}_i \}_{i=1}^{\infty}$ with pairwise disjoint interiors and such that
$$f^{-1}(Q_i) = U_{i,1} \cup \cdots \cup U_{i,m},$$
where the sets $U_{i,j} \subset \subset U(0,f,r) \setminus \{ 0\}$ are pairwise disjoint normal domains of $f$, see the proof of \cite[Lemma~II.7.1]{Rickman-book} for further details of this step. Then the restricted mappings
$$f_{i,j} \colonequals f|_{U_{i,j}} : U_{i,j} \to Q_i \quad \text{($i=1,2, \ldots$ and $j=1,\ldots, m$)}$$
are quasiconformal homeomorphisms. Especially, by~\eqref{eq:SomethingNewForMe} we may estimate the measure of the set $U(0,f,r)$ from below as follows
\begin{align}\label{eq:PlanarEquationOfDoom}
C \sum_{i,j} \int_{B_r} J_{f_{i,j}^{-1}}(y) \chi_{Q_i}(y) \, dy = C \mathcal{H}^2(U(0,f,r)) \ge r^{\alpha_f}, 
\end{align}
where $\chi_{Q_i}$ stands for the standard characteristic function of the square $Q_i$ and $C>0$ is some absolute constant. 

Now it follows from the assumption $q \ge q_f$, Hölder's inequality, estimate \eqref{eq:PlanarEquationOfDoom}, chain rule, and the change of variable formula that
\begin{align}\label{eq:MainCalculationPlanar}
1 \le r^{2(\alpha_f(q+1)-q)} &= C r^{-2q} \abs{B(0,r^{\alpha_f})}^{q+1} \nonumber \\
&\le C r^{-2q} \biggl( \sum_{i=1}^{\infty} \sum_{j=1}^{m} \int_{B_r} J_{f_{i,j}^{-1}}(y) \chi_{Q_i}(y) \, dy \biggr)^{q+1} \nonumber \\
& \le C  \sum_{i=1}^{\infty} \sum_{j=1}^{m} \int_{B_r} J_{f_{i,j}^{-1}}(y)^{q+1} \chi_{Q_i}(y) \, dy \\
&= C \sum_{i=1}^{\infty} \sum_{j=1}^{m} \int_{B_r} \frac{1}{J_{f}(f_{i,j}^{-1}(y))^q} J_{f_{i,j}^{-1}}(y) \chi_{Q_i}(y) \, dy \nonumber \\
&= C \sum_{i=1}^{\infty} \sum_{j=1}^{m} \int_{U_{i,j}} \frac{1}{J_f(x)^q}  \, dx \nonumber \\
&= C \int_{U(0,f,r)} \frac{1}{J_f(x)^q}  \, dx, \nonumber
\end{align}
where the constant $C>0$ varies line by line but is independent on $r>0$. By applying Lemma~\ref{Lemma:NormalDomain} and the absolute continuity of Lebesgue integral we see that 
$$\int_{U(0,f,r)} \frac{1}{J_f(x)^q}  \, dx \stackrel{r \to 0}{\to} 0 \, ,$$
which leads to a contradiction with the estimate \eqref{eq:MainCalculationPlanar}. This proves the theorem in the plane.
	
\subsubsection*{B) Proof of the higher dimensional case:} Let us assume that the inner dilatation of a non-constant quasiregular mapping
$$f : \Omega \to \R^n \quad \text{($\Omega \subset \R^n$ domain with $n \ge 3$)}$$
satisfies the assumption
$$K_I(f) < \inf_{x \in \mathcal{B}_f} i(x,f) \, ,$$
and that for the reciprocal of the Jacobian determinant we have
$$1/J_f \in L_{\loc}^q(\Omega) \quad \text{for some } q > q_f \, .$$
For the contradiction assume that $\mathcal{B}_f \neq \emptyset$. First we obtain that we may always assume
$$q_f = \frac{2}{n(1-\alpha_f)}-1 > 0 \, .$$
Indeed, otherwise it would follow that
$$\inf_{x \in \mathcal{B}_f} i(x,f) \ge \biggl( \frac{n}{n-2} \biggr)^{n-1} K_I(f) \, ,$$
which is not possible if $\mathcal{B}_f \neq \emptyset$, see \cite[Corollary~III.5.8]{Rickman-book}. Thus, from now on we assume
$$0 < q_f < q \, .$$ 
From now on let us denote 
\begin{align*}
\alpha \colonequals n(\alpha_f(q+1)-q) < n-2
\end{align*}
and fix $\varepsilon > 0$ such that
\begin{align}\label{DefOfEpsilon}
\alpha + \varepsilon n(q+1) = n-2 \, .
\end{align}

Next, fix a point $x_0$ from the \emph{inf-index branch set} 
$$\mathcal{B}_f^I \colonequals \{ x \in \Omega : i(x,f) = \inf_{z \in \mathcal{B}_f} i(z,f) \} $$ 
and fix a normal neighborhood $U$ of $x_0$ such that
$$i(x_0,f) = N(f,U) \colonequals \sup_{y \in \R^n} \card f^{-1}(y) \cap U ,$$
see \cite[Proposition I.4.10]{Rickman-book} for the existence of such a normal neighborhood. Then it follows that 
\begin{align*}
i(x,f) = i(x_0, f) \quad \text{for every } x \in \mathcal{B}_f \cap U,
\end{align*}
see again \cite[Proposition~I.4.10]{Rickman-book}. By applying \cite[Proposition~III.5.3]{Rickman-book} we may find a compact set $F \subset \mathcal{B}_f \cap U$ such that
\begin{align}\label{eq:MeasureOfTheHelpSet}
\mathcal{H}^{n-2}(f(F)) > 0 \, .
\end{align}
Fix $0 < \delta < 1$. Then by applying Lemma~\ref{Lemma:NormalDomain} and the modulus of continuity estimate \eqref{ModulusOfContinuity} we cover the set $f(F)$ by a collection of balls $\{B(y_{\eta},r_{\eta}) \}_{\eta \in I}$ satisfying the following conditions:
\begin{itemize}
	\item[(i)] For each $\eta \in I$ we have $y_{\eta} \in f(F)$.
	\item[(ii)] For each $\eta \in I$ we have
	$$B(x_{\eta},r_{\eta}^{\alpha_f + \varepsilon}) \subset U(x_{\eta},f,r_{\eta})$$ 
	where $\varepsilon > 0$ is defined in \eqref{DefOfEpsilon}. Note that obtaining this condition would not be possible if $\varepsilon = 0$, i.e., if $q = q_f$. 
	\item[(iii)] For each $\eta \in I$ we have
	$$0 < r_{\eta} < \delta \quad \text{and} \quad \diam(U(x_{\eta},f,r_{\eta})) < \delta.$$
\end{itemize}
By Vitali covering lemma we then find a countable subcover $B_1, B_2,\ldots$ of pairwise disjoint balls such that
$$f(F) \subset \bigcup_{k} 5B_k \, .$$
For this subcover we write each set $B_k \setminus f(\mathcal{B}_f \cap U)$ as a countable union 
$$B_k \setminus f(\mathcal{B}_f \cap U) = \bigcup_{i=1}^{\infty} \overline{Q}_{i_k}$$
of closed cubes $\overline{Q}_{i_k}$ with non-empty, pairwise disjoint interiors such that
$$f^{-1}(Q_{i_k}) = U_{i_k,1} \cup \cdots \cup U_{i_k,m} \, ,$$
where the sets $U_{i_k,j_k}\subset U_k \setminus \mathcal{B}_f$ are pairwise disjoint normal domains. The existence of such a collection of cubes $Q_{i_k}$ and normal domains $U_{i_k,j_k}$ can be verified by applying Lemma~\ref{Lemma:NormalDomain} similarly as we did in the planar case. 

Now we observe that each of the restrictions
$$f_{i_k,j_k} \colonequals f|_{U_{i_k, j_k}} : U_{i_k,j_k} \to Q_{i_k} \quad \text{($i_k =1,2, \ldots$ and $j_k=1, \ldots, m$)}$$
gives a quasiconformal inverse 
$$f_{i_k,j_k}^{-1} : Q_{i_k} \to U_{i_k,j_k} \, .$$
In addition, as a quasiregular mapping the mapping $f$ satisfies \emph{Lusin's contition $(N^{-1})$} which means that for every set $E \subset \Omega$ we have
$$\mathcal{H}^n(E)= 0 \quad \text{whenever } \mathcal{H}^n(f(E)) = 0 \, ,$$
and therefore we may assume that 
$$\mathcal{H}^n(\partial U_{i_k,j_k})= 0 \quad \text{for every index pair } i_k,j_k \, .$$
Moreover, we also have  $\mathcal{H}^n(\mathcal{B}_f) = 0$, see \cite[Theorem~II.7.4]{Rickman-book}. Thus, by applying area formula we obtain that
\begin{align}\label{eq:JacobianAndNormalDomain}
C\sum_{i_k} \sum_{j_k} \int_{Q_{i_k}} J_{f_{i_k,j_k}^{-1}}(y) \, dy =   C\mathcal{H}^n(U_k) \ge   r_k^{n(\alpha_f+\varepsilon)},
\end{align}
where $U_k \colonequals U(x_k, f, r_k)$ and $C>0$ is some absolute constant. By applying estimate \eqref{eq:JacobianAndNormalDomain}, Hölder's inequality, chain rule, and the change of variable formula we get
\begin{align}\label{eq:MainCalculation}
\sum_{k} r_k^{n(\alpha_f(q+1)-q) + \varepsilon n(q+1)} &\le C \sum_{k} r_k^{-nq} \biggl( \sum_{i_k} \sum_{j_k} \int_{Q_{i_k}} J_{f_{i_k,j_k}^{-1}}(y)  \, dy \biggr)^{q+1}  \nonumber
\\ \nonumber
& \le C  \sum_{k} \sum_{i_k} \sum_{j_k} \int_{Q_{i_k}} J_{f_{i_k,j_k}^{-1}}(y)^{q+1} \, dy  \\
&= C \sum_{k} \sum_{i_k} \sum_{j_k} \int_{Q_{i_k}} \frac{1}{J_{f}(f_{i_k,j_k}^{-1}(y))^q} J_{f_{i_k,j_k}^{-1}}(y) \, dy  \\  \nonumber
&= C \sum_{k} \sum_{i_k} \sum_{j_k} \int_{U_{i_k,j_k}} \frac{1}{J_f(x)^q}  \, dx \\  \nonumber
&= C \sum_{k}  \int_{U(y_k,f,r_k)} \frac{1}{J_f(x)^q}  \, dx\\  \nonumber
&\le C  \int_{\{x : \dist(x,\mathcal{B}_f \cap U) < 2\delta \}} \frac{1}{J_f(x)^q}  \, dx
\end{align}
where the constant $C>0$ varies line by line but is independent on the parameter $\delta>0$. 

In order to finish the proof we recall that $\varepsilon > 0$ was chosen such a way that
$$n(\alpha_f(q+1)-q) + \varepsilon n(q+1) = \alpha + \varepsilon n(q+1) = n-2 \, .$$
Therefore, estimate \eqref{eq:MainCalculation} gives for the $(n-2)$-dimensional Hausdorff $\delta$-content of the set $f(F)$ the following
\begin{align*}
\mathcal{H}_{\delta}^{n-2}(f(F)) \le  \int_{\{x : \dist(x,\mathcal{B}_f) < 2\delta \}} \frac{1}{J_f(x)^q}  \, dx \stackrel{\delta \to 0}{\to} 0 \, ,
\end{align*}
where the convergence to the zero on the right-hand side follows from the absolute continuity of the Lebesgue integral, from the local compactness of the branch set $\mathcal{B}_f$, and the fact that $\mathcal{H}^n(\mathcal{B}_f) = 0$. In this way we see that $\mathcal{H}^{n-2}(f(F)) = 0$ which contradicts the assuption \eqref{eq:MeasureOfTheHelpSet}. This concludes the proof.
\end{proof}

\begin{example}\label{ex:Sharpness}
Consider the function
$$f : \mathbb{D} \to \mathbb{C}, \quad f(z) = z^m \abs{z}^{-\frac{m(K-1)}{K}} \quad \quad \text{($\mathbb{D} \subset \mathbb{C}$ unit disk)} \, ,$$
where $m \ge 2$ is a given integer and $1 \le K < m$ is some real number. In polar coordinates the function takes the form
\begin{align*}
(r, \theta) \mapsto (r^{m/K}, m\theta) \, .
\end{align*}	
Therefore, it is an $m$-to-1 quasiregular mapping such that $0 \in \mathcal{B}_f$ and with the distortion
\begin{align*}
\frac{\abs{D^{\#}f}^2}{J_f} = \frac{m r^{\frac{m-K}{K}}}{(m/K)r^{\frac{m-K}{K}}} = K \quad \text{a.e.}
\end{align*}
Especially, it follows that our function is $K$-quasiregular. Furthermore, a direct computation gives us
\begin{align}\label{eq:JacobianOfTheCounterexample}
\int_{\mathbb{D}} \frac{1}{J_f(z)^q} \, dz &= \frac{2\pi K }{m^2} \int_0^1 \frac{t}{t^{\frac{2q(m-K)}{K}}} \, dt \, .
\end{align}
As the integral~\eqref{eq:JacobianOfTheCounterexample} is finite if and only if $q < \frac{K}{m-K}$ it follows that
$$1/J_f \in L^q(\mathbb{D}) \quad \text{for all} \quad q < q_f \, .$$
In particular, this demonstrates the sharpness of Theorem~\ref{thm:Main1} in the  plane.
\end{example}

\section{Final remarks}

We end this article with a few remarks which we consider to be useful for the further studies on Martio's conjecture. In what follows, let 
$$f : \Omega \to \R^n \quad \text{($\Omega \subset \R^n$ domain with $n \ge 2$)}$$
be a non-constant quasiregular mapping with a non-empty branch set. We then point out that with some minor modifications in the proof of \cite[Theorem~III.5.5]{Rickman-book} it is possible to show the following upper bound for the Hausdorff dimension of the image of the branch set
\begin{align}\label{eq:DimensionEstimate}
\dim_H f(B_f) \le \alpha_f \dim_H B_f \, .
\end{align}
This shows that in dimension $n \ge 3$ a non-constant quasiregular mapping with a small inner dilatation always reduces the Hausdorff dimension of its branch set. This leads to the following two remarks.

\begin{remark}\label{remark:Dimension}
By \cite[Corollary~4.23]{MartioVaisala} BLD-mappings preserve Hausdorff dimensions of sets. Thus the dimension estimate \eqref{eq:DimensionEstimate} provides us yet another proof
for the strong Martio's conjecture in the BLD-setting.
\end{remark}

\begin{remark}
It follows from \cite[Proposition~III.5.3]{Rickman-book} combined with the estimates \eqref{eq:SizeOfBranch} and \eqref{eq:DimensionEstimate} that every non-constant quasiregular mapping
$$f : \Omega \to \R^n \quad \text{($\Omega \subset \R^n$ domain with $n \ge 3$)}$$
with the inner dilatation 
$$K_I(f) < \inf_{x \in \mathcal{B}_f} i(x,f)$$
and with the dilatation tensor
$$G_f \in W^{1,p} \quad \quad \text{for some } p > \frac{2-n(1-\alpha_f)}{\alpha_f}$$
is a local homeomorphism. This observation is motivated by the result \cite[Theorem~3.3]{HeinonenKilpelainen2000} of Heinonen and Kilpeläinen.
\end{remark}

Finally, we point out that the results of this article connect Martio's conjecture to the problem about the optimal integrability of the reciprocal of the Jacobian determinant for quasiregular mappings:

\begin{remark}\label{Remark:IntegrabilityOfJacobian}
	Suppose that
	$$f : \Omega \to \R^n \quad \text{($\Omega \subset \R^n$ domain with $n \ge 2$)}$$
	is a non-constant quasiregular mapping. Then by following the fundamental works of Gehring \cite{Gehring1973}, Elecrat and Meyers \cite{ElecratMeyers1975}, and Martio \cite{Martio1974} and applying theory of Muckenhoupt weights, see e.g. \cite[Chapter~V]{SteinBook}, it is possible to show that
	$$1/J_f \in L_{\loc}^q(\Omega) \quad \text{for some $q>0$}.$$
	However, the optimal level of integrability for $1/J_f$ is only known in the planar case where it is a consequence of the deep work \cite{Astala1994} by Astala.  We refer to \cite[p.~254]{HenclKoskelaZhong2007} and \cite[p.~964]{KoskelaOnninenRajala2012} for some further discussion in this direction.
\end{remark}


\newcommand{\etalchar}[1]{$^{#1}$}
\def\cprime{$'$}

\end{document}